\documentclass[12pt, twoside]{amsart}
\usepackage[latin1]{inputenc}
\usepackage{times}
\usepackage{amsthm}
\usepackage{amssymb, verbatim}
\usepackage{amsmath}
\usepackage[outline]{contour}
\usepackage{mathrsfs}
\usepackage[all]{xy}
\usepackage{lscape}
\usepackage{tikz}
\usetikzlibrary{decorations.pathmorphing}
\usetikzlibrary{arrows}
\usepackage{graphicx}
\usepackage{pstricks}
\usepackage{caption}
\usepackage{centernot}
\usepackage{stmaryrd}

\topmargin = - 40 pt 
\textheight = 725 pt 

\oddsidemargin = 0 pt 
\evensidemargin = 0 pt 
\textwidth      = 460 pt  
\usepackage{color}

\newcommand     {\Nor}    {{\mathscr{N}}}

\newtheorem{lemma}{Lemma}[section]

\theoremstyle{definition}

\newtheorem{defn}{Definition}[section]



\newcommand{\thref}[1]{Theorem {\rm\ref{#1}}}

\newcommand{\reref}[1]{Remark {\rm\ref{#1}}}


\def\<{\langle}
\def\>{\rangle}












 


\newcommand{\bpm}{\begin{pmatrix}}

\newcommand{\epm}{\end{pmatrix}}

\numberwithin{equation}{section}

\def\HK{\mathbf{HK}}

\newcommand{\io}{\iota}

\newcommand{\thickto}{\pmb{\contour{black}{$\,\longrightarrow\,$}}}
\newcommand{\notthickto}{\pmb{\contour{black}{$\,\centernot\longrightarrow\,$}}} 
\newcommand{\rto}{\stackrel{r}{\longrightarrow}}
\newcommand{\lto}{\stackrel{l}{\longrightarrow}}
\newcommand{\starto}{\stackrel{*}{\longrightarrow}}

\newtheorem{theorem}{Theorem}[section]
\newtheorem{definition}[theorem]{Definition}
\newtheorem{proposition}[theorem]{Proposition}
\newtheorem{corollary}[theorem]{Corollary}
\newtheorem{remark}[theorem]{Remark}
\newtheorem{example}[theorem]{Example}

\frenchspacing


\title[Normal form in Hecke-Kiselman monoids]{Normal form in Hecke-Kiselman monoids associated with simple oriented graphs}


\author{Riccardo Aragona}
\address{DISIM, University of L'Aquila, Via Vetoio, Coppito - 67100 L'Aquila, Italy}
\email{riccardo.aragona@univaq.it}

\author{Alessandro D'Andrea}
\address{Dipartimento di Matematica, Universit\`a degli Studi di
Roma ``La Sapienza''\\ P.le Aldo Moro, 5 -- 00185 Rome, Italy}
\email{dandrea@mat.uniroma1.it}

\thanks{Both authors are members of INdAM-GNSAGA. The second named author was partially supported by Ateneo fundings from Sapienza University in Rome.}



\begin{document}
\maketitle

\begin{abstract}
We generalize Kudryavtseva and Mazorchuk's concept of canonical form of elements \cite{KM09} in Kiselman's semigroups to the setting of a Hecke-Kiselman monoid $\HK_\Gamma$ associated with a simple oriented graph $\Gamma$. We use confluence properties from \cite{huet} to associate with each element in $\HK_\Gamma$ a normal form; normal forms are not unique, and we show that they can be obtained from each other by a sequence of {\em elementary commutations}. We finally describe a general procedure to recover a (unique) lexicographically minimal normal form.
\end{abstract}


\section{Introduction}


Let $\Gamma$ be a {\em simple mixed graph}, i.e., each pair of distinct vertices in $\Gamma$ has at most one connection, which can be either oriented or unoriented; thus, there are no oriented cycles of length two and 
no vertex in $\Gamma$ has a self loop.

One may use \cite{GM11} the combinatorial content of $\Gamma$ to give a presentation of a Hecke-Kiselman semigroup $\HK_\Gamma$. If $V$ is the set of vertices of $\Gamma$, $\HK_\Gamma$ is generated by idempotent elements $a_i, i \in V$, which satisfy the relations:
\begin{itemize}
\item $a_i a_j = a_j a_i$ if there is no connection between the vertices $i, j \in V$;
\item $a_i a_j a_i = a_j a_i a_j$ if there is an unoriented connection between $i$ and $j$;
\item $a_i a_j a_i = a_j a_i a_j = a_i a_j$ if there is an arrow connecting $i$ to $j$.
\end{itemize}

If $\Gamma$ is an unoriented simple graph, i.e., if relations in the above presentation are all of the first two kinds, then we obtain the Coxeter monoid associated to the simply laced Dynkin diagram $\Gamma$. This is also known in the literature either as Richardson-Springer \cite{RS} or $0$-Hecke monoid \cite{0-Hecke}, as its monoid algebra \cite{norton} may be obtained as the $q=0$ specialization of a Iwahori-Hecke algebra.

The third type of relation has been first observed by Kiselman \cite{kiselman}. When $\Gamma = \Gamma_n$ is the graph on the vertex set $\{1, 2, \dots, n\}$ with a single oriented connection between $i$ and $j$ each time that $i < j$, one obtains the so-called {\em Kiselman semigroups}, so that Kiselman's original example corresponds to $\Gamma_3$. These semigroups also occur in the study \cite{cd15} of some graph-dynamical systems related to SDS \cite{sds}.

Understanding which mixed graphs $\Gamma$ yield finite Hecke-Kiselman monoids is a difficult problem and the only nontrivial results so far seem to be \cite{ad13} and \cite{aj, aj2}. In the same vein, a characterization of reduced expressions of elements as words in the idempotent generators are only known in the Kiselman case $\Gamma = \Gamma_n$ \cite{KM09} or when $\Gamma$ is an unoriented graph and one may reduce to standard Coxeter combinatorics.
The present paper deals with the easier case where only oriented connections occur. We should stress that our result is implicit in \cite{GM11} when $\Gamma$ is an equioriented Dynkin graph of type $A_n$.

We employ Huet's reformulation \cite{huet} of Newman's results \cite{new} to extend the strategy outlined by Kudryavtseva and Mazorchuk \cite{KM09} for Kiselman's semigroups, to all (possibly infinite) Hecke-Kiselman monoids corresponding to simple oriented graphs. The concrete statement is that {\em normal forms} of each element in $\HK_\Gamma$ all arise via a decreasing sequence of cancellations, that all decreasing sequences of cancellations may be continued to a normal form, that such normal forms all have the same length, and may be obtained from each other by a sequence of elementary commutations between pairs of disconnected idempotent generators. We end the paper with some final comments on how to select a lexicographically minimal normal form.

\section{Normal Form in $\HK_{\Gamma}$}

In what follows, $\Gamma=(V,E)$ will be a {\em simple oriented  graph}, i.e., a directed graph that does not have oriented cycles of length 1 or 2, so that there are no self-loops and there is at most one connection between two given vertices. Here $V$ denotes the vertex set and $E\subseteq V\times V$ is the arrow set, where $(a,b)\in E$ if and only if there is an arrow connecting $a$ to $b$; indeed, we will use the shorthand notation
 $a \thickto b$ as equivalent to $(a,b) \in E$. Notice, however, that we will reserve the symbol $\longrightarrow$ for a different context, in order to adhere to notations from \cite{huet}.

Any given choice of $\Gamma$ yields a {\em Hecke-Kiselman monoid} $\HK_\Gamma$ defined by the presentation
\begin{align*}
\HK_{\Gamma}=\langle a\in V \;|\; & a^2=a, \text{ for every } a \in V;\\
& a b a = b a b = a b, \text{ if } a\thickto b;\\
& a b = b a, \text{ if } a \notthickto b \text{ and } b \notthickto  a \rangle.
\end{align*}

If we denote by $F(V)$ the free monoid on the alphabet $V$, then we have a canonical projection
$$
\pi:F(V) \to \HK_{\Gamma}.
$$
Every $a\in V\subseteq F(V)$ will  be called a \emph{letter}; if $w\in F(V)$ is obtained by multiplying letters among which $a$ occurs, we will say that (the word) $w$ contains (the letter) $a$, or that $a$ occurs in $w$. The same terminology will be used when $w\in \HK_{\Gamma}$; this is well defined as the letter content in both sides of each relation presenting $\HK_{\Gamma}$ is the same, so that all words in $F(V)$ projecting via $\pi$ to the same element of $\HK_\Gamma$ have the same letter content. Note that each letter in $\HK_{\Gamma}$ is idempotent.

\begin{remark}
$\HK_{\Gamma}$ is finite if and only if $\Gamma$ is acyclic \cite{ad13}.
\end{remark}

\begin{lemma}\label{lem:awa=aw}
If $a\in \HK_\Gamma$ is a letter and $w\in\HK_{\Gamma}$ is obtained by multiplying  letters that do not admit arrows to (respectively from) $a$, then $awa=aw$ (resp. $awa=wa$).
\end{lemma}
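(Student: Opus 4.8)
The plan is to induct on the length of $w$ as a word in the letters of $V$, handling the two symmetric cases in parallel; I will write out the "no arrows to $a$" case, from which $awa=aw$, and note that the other is obtained by a left-right reflection. The base case $w$ empty is trivial since $a^2=a$. For the inductive step, write $w=bw'$ where $b$ is a letter admitting no arrow to $a$ and $w'$ is a (shorter) product of letters none of which admits an arrow to $a$. Then $awa = ab(w'a)$, and by the inductive hypothesis applied to $w'$ we have $w'a = aw'a$, so $awa = ab\,aw'a = (aba)w'a$.

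Now I would split into the three possibilities for the pair $a,b$ allowed by the hypothesis. If $b=a$, then $aba = a^2 = a$ and we are done by idempotency together with the inductive hypothesis. If there is no connection between $a$ and $b$, then $ab=ba$ and $aba = a^2b = ab$. If $b\thickto a$ (an arrow \emph{from} $b$ \emph{to} $a$ — this is permitted, since we only forbade arrows \emph{to} $a$ from the letters of $w$, i.e. we forbade $b\thickto a$ reversed; careful: here the forbidden direction is $a\to b$, so $b\to a$ is allowed only if... let me restate), the hypothesis forbids $a$ having an arrow \emph{to} $b$? No: $w$ is built from letters that do not admit arrows \emph{to} $a$, so $b$ does not have an arrow to $a$, i.e. $b\notthickto a$; the arrow $a\thickto b$ is still possible. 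In that case the Hecke-Kiselman relation gives $aba = ab$ directly. In every case $aba = ab$, hence $awa = ab\,w'a = a(bw')a$... wait, $aba = ab$ gives $(aba)w'a = abw'a = a(bw')\!\cdot a\cdot$? I need $awa$, and I have $awa = (aba)w'a = (ab)w'a = a(bw'a)$; applying the inductive hypothesis to $w'$ once more in the form $bw'a$... actually cleaner: $a(w'a) = awa$ is what we want to compute, and we've shown it equals $ab\,w'a$; but $ab\,w'a = a\,(bw')\,a$ only rewrites $w=bw'$, so instead conclude $awa = (aba)w'a = ab w'a$ and then $abw'a = a b (w'a)$; re-expand by IH $w'a=aw'a$ backwards is circular. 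So finish thus: from $awa = ab\cdot aw'a$ and $aba=ab$ we get $awa = ab\cdot w'a$; and separately $aw = abw'$, with $abw' = a(bw')$... the two agree once we note $aw'a$ vs $w'a$: since $aba=ab$, $ab\cdot aw'a = ab\cdot w'a$ — no, that step needs $a$ to commute past, which is exactly $aw'a=w'a$, false in general.

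Let me reorganize to avoid this trap: use $awa = a(wa)$ and induct showing $wa$, when left-multiplied by $a$, collapses. Actually the clean induction is on $w$ with the statement "$awa=aw$": write $w = w'b$ (peel from the \emph{right}), so $awa = (aw')\,ba = (aw'a)\,ba$ by IH on $w'$; then $a\cdot ba = aba = ab$ by the case analysis above (same three cases, using that $b$ has no arrow to $a$); so $awa = aw'\,ab = aw'\,ab$, and $aw'ab$... hmm $ab$ at the end, while $aw = aw'b$. Since $ab = ba$ or $ab=aba$ or $ab=a$, in the commuting case $aw'ab=aw'ba$, not obviously $aw'b$. This also stalls. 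The resolution: in the commuting and $b\thickto a$...

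The main obstacle, as this scratch-work shows, is bookkeeping the direction of induction so that the relation $aba=ab$ (valid because $b\not\thickto a$, covering all three sub-cases) can be applied with the "floating" $a$ adjacent to a $b$ rather than separated by $w'$. The correct choice is to peel $w=bw'$ from the \emph{left} and use the IH in the form $aw'a = aw'$ only \emph{after} moving the middle $a$ leftward: $awa = a b (w'a)$; apply IH to the word $w'$, but to the quantity $w'a$ we instead want $a w' a$... I will instead prove the slightly stronger bookkeeping statement by showing $u a = uaa$-type collapses propagate, or simply observe: $awa = a(bw')a$; since $b\not\thickto a$, Lemma's own relations give $ab = aba$ when there's an arrow $a\to b$, $ab=ba$ when disconnected, $a$ when $b=a$; in all cases $aba$ begins with $ab$, precisely $aba=ab$. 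Thus $a b w' a = a b (w' a)$ and now recurse on $bw'$? No — recurse on $w'$: $a(bw')a$, pull the trailing $a$ through $w'$ by IH to reach $ab\,a\,w' \,a$? The IH says $aw'a = aw'$, giving $a(bw')a = ab\cdot(w'a)$ and I want this to be $ab w' = aw$. Write $ab w' a$; by IH on $w'$ (noting $b$ irrelevant) $w' a$ equals... IH is about \emph{leading} $a$.

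Honestly the workable version: prove by induction on $|w|$ that for any letter $a$ and any $w$ with no letter having an arrow to $a$, one has $awa = aw$ AND $wa$ is unchanged under left-multiplication by $a$ after it has absorbed—cleanest is: $awa = a(w_1)(aa')$-free. I'll cut the scratch and state the clean argument: induct on $|w|$; for $w=bw'$, $awa = (ab)(w'a) = (ab)(aw'a)$? — no. \textbf{Final clean form:} $awa = a\,b\,w'\,a$; apply the inductive hypothesis to the shorter word $w'$ to get $aw'a = aw'$; but the expression has $a b w' a$, not $a\cdots a$ with $a$ adjacent — so first use $aba=ab$ to \emph{insert} an $a$: $abw'a = abaw'a = ab\,(aw'a) = ab\,(aw') = ab w'\cdots$, now $abw' = (ab)w' = (aba)w' = ab a w'$, and we are going in circles but \emph{productively}: we've shown $awa = abaw'a$ and $aw = abaw'$... no, $aw = abw' = abaw'$. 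So it suffices to show $abaw'a = abaw'$, i.e. (setting $u = abaw'$, which ends in $w'$, whose letters have no arrow to $a$, and noting $u$ ends in... ) — apply IH to $w'$ with the context: $u a = (aba)w'a = aba(w'a) = aba(aw'a)=aba(aw') = abaw'\cdot$? Using $aw'a=aw'$ requires $a$ adjacent on the \emph{left} of $w'$, which it is after writing $w'a = (aw'a)$— no, $w'a\ne aw'a$ in general, that's the whole point; but $aw'a = aw'$ \emph{is} the IH, so in $aba\cdot w'a$ I cannot freely prepend $a$ to $w'$.

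The genuinely correct peel is from the \textbf{right}: $w = w' b$ with $b$ having no arrow to $a$. Then $awa = aw'(ba)$. Since $b\not\thickto a$: if $b=a$, $ba=a$; if disconnected, $ba = aba$ so $aw' ba = aw'aba$; if $a\thickto b$, $ba=aba$ likewise — wait we need $ba$ in terms allowing collapse: $aba = ab$ still, and $ba$ vs $aba$: $ab = aba$ means $ab$ absorbs a trailing $a$, not that $ba$ does. Hmm, but $bab = ab$ too when $a\thickto b$? The relation is $aba=bab=ab$. So $bab=ab$, giving $b(ab)=ab$? That's $bab=ab$, yes. Not directly $ba$-related. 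However $ba\cdot b = ab$? No, $bab=ab$, so $ba$ followed by $b$ is $ab$; I want $a$ followed by $ba$: $a(ba) = aba = ab$. There it is: $a\cdot ba = aba = ab$, valid since $b\not\thickto a$ (covering $a\to b$, disconnected, $b=a$). So $awa = aw'(ba)$ — but the leading $a$ is not adjacent to this trailing $ba$; I need $a$ next to $ba$. So peel from the right AND invoke IH to move the leading $a$ rightward first: $aw'\,ba$; by IH, $aw' = aw'a$ (IH on $w'$, shorter), so $aw'\,ba = aw'a\,ba = aw'(aba) = aw'(ab) = (aw')b\cdot$? $= aw'ab$, hmm, $= aw'a\cdot b = aw'\cdot b = aw$ using IH again.

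So: $awa = aw'\cdot ba = (aw'a)\cdot ba = aw'\cdot(aba) = aw'\cdot ab = (aw'a)\cdot b = aw'\cdot b = aw'b = aw$, where I used the IH "$aw' = aw'a$" twice and the relation $aba=ab$ (valid as $b\not\thickto a$) once, plus reassociation. The symmetric statement $awa = wa$ when no letter of $w$ has an arrow \emph{from} $a$ follows by the anti-automorphism of $\HK_\Gamma$ reversing words (which reverses all arrows, hence swaps "to" and "from"), or by the mirror-image induction peeling from the left. The main point to get right — and the only subtlety — is the direction of peeling versus the side on which the IH supplies its extra $a$, so that the relation $aba=ab$ can be applied with all three letters adjacent; once that is arranged the computation is the short chain just displayed.
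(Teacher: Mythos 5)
Your final argument is correct and essentially identical to the paper's: both rest on the single-letter identity $aba=ab$ for any $b$ with $b\notthickto a$ (checked via the same three cases) and then an induction whose key chain $aw'ba=(aw'a)ba=aw'(aba)=aw'(ab)=(aw'a)b=aw'b$ is exactly the paper's computation $auva=(aua)va=au(ava)=au(av)=(aua)v=auv$ specialized to $v=b$, with the symmetric case handled by reflection. The lengthy scratch work should be excised, but the clean chain you eventually isolate is the right proof.
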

\begin{proof}
If $a$ and $b$  are letters in $\HK_{\Gamma}$ such that $b\notthickto a$, then either $a \thickto b$, whence $aba=ab$, or $ab=ba$, whence $aba = a(ba) = a(ab) = a^2b = ab$. Also, if $u$ and $v$ are words in $\HK_{\Gamma}$ satisfying respectively $aua = au$ and  $ava = av$, then  $auva = (au)va = (aua)va = au(ava) = au(av) = (aua)v = auv$. Now the statement follows by an easy induction.\\
The case where $w$ is obtained by multiplying letters that do not admit arrows from $a$ is done similarly.
\end{proof}

Let $w_1, w_2, u \in F(V)$. It is useful to introduce the following  \emph{elementary cancellations} on words in $F(V)$.
\begin{itemize}
\item \emph{Right cancellation}: $w_1 a u a w_2\rto  w_1 a u w_2$, if $a$ is a letter and no letter in $u$ has an arrow to $a$;
\item \emph{Left cancellation}: $w_1 a u a w_2\lto  w_1  u a w_2$, if $a$ is a letter and no letter in $u$ has an arrow from $a$.
\end{itemize}
Without loss of generality, we may assume above that $u$ does not contain the letter $a$ and only focus on elementary cancellations between {\em consecutive} occurrences of the same letter. 
Notice that if $v$ is obtained from $w$ by a sequence of elementary cancellations, then $v,w\in F(V)$ map to the same element in $\HK_{\Gamma}$.

\begin{remark}\label{altpresentation}
Idempotence of letters and each relation $aba = bab = ab$ in the presentation of $\HK_\Gamma$ are special instances of elementary cancellations. Thus, elementary cancellations along with commutations of disconnected letters provide an equivalent presentation of $\HK_\Gamma$.
\end{remark}

\begin{definition}
Let $v,w\in F(V)$. We shall write by $w \starto v$ if $v$ is obtained from $w$ by a (possibly empty) sequence of (either right or left) elementary cancellations. In other words, $\starto$ is the reflexive-transitive closure of the relation $\longrightarrow\, = \,\rto \cup \lto$ on $F(V)$.
\end{definition}
A  \emph{simplifying sequence} (with respect to $\Gamma$) from $w$ to $v$ is a sequence of elementary cancellations which transform $w$ into $v$. This is is analogous to \cite[Remark 7]{KM09}.
We take the following definition from \cite{huet}
.
\begin{definition}
A word $w\in F(V)$  is a \emph{normal form} for $\pi(w)\in\HK_{\Gamma}$  if no elementary cancellation may be performed  on $w$.
\end{definition}

We shall denote by $\mathscr{N}$ the set of all normal forms in $F(V)$. Notice that $\mathscr{N}$ depends on $\Gamma$, which we consider to be fixed once and for all.

\begin{remark}
By Lemma \ref{lem:awa=aw}, $w \in F(V)$ is a normal form for $\pi(w)\in\HK_\Gamma$ if and only if each subword of $w$ of the form $aua$, where $a\in V$
and $u\in F(V)$, contains at least one letter with an arrow to $a$ and at least one letter with an arrow from $a$. In \cite{GM11} these words are called {\em special} when $\Gamma = \Gamma_n$ and {\em strongly special} when $\Gamma$ is an equioriented Dynkin diagram of type $A_n$.
\end{remark}


Note that, by definition, if $\gamma\in\HK_{\Gamma}$, then any word $w\in \pi^{-1}(\gamma)$ of minimal length is a normal form of $\gamma$ so, in particular, each $\gamma\in\HK_{\Gamma}$ admits at least one normal form. However, in principle, a normal form of $\gamma\in\HK_{\Gamma}$ may fail to be of minimal length. We will show that this is not the case by proving that all normal forms of $\gamma$ share the same length and, more precisely, that they can be obtained from each other by a sequence of commutations between disconnected letters. 

Recall that 
\cite[Theorem 6]{KM09} exploits Newman's Diamond Lemma~\cite{new} in the case of the complete oriented acyclic $\Gamma_n$, so as to show that:
\begin{enumerate}
\item each $\gamma\in\HK_{\Gamma_n}$ has a unique normal form;
\item every word $w\in\pi^{-1}(\gamma)$ is connected  to the unique normal form for $\gamma$ by a simplifying sequence;
\item each simplifying sequence starting from $w$ may be completed to a sequence as in (2).
\end{enumerate}
Claim (1) may certainly fail in our generalized setting. Indeed if $a,b\in\HK_{\Gamma}$ are commuting letters then $ab$ and $ba$ are distinct normal forms for the same element in $\HK_{\Gamma}$. We want to show that this is basically the only obstruction to uniqueness.

\begin{definition}
We  denote by $\sim$ the equivalence relation on  $F(V)$ generated by  \emph{elementary commutations}
$$
w_1abw_2 \sim w_1ba w_2,
$$
where $w_1,w_2\in F(V)$ and $a,b\in V$ are \emph{disconnected} letters, i.e., they satisfy $a\notthickto b$ and $b \notthickto a$.
\end{definition}

Our strategy is to use confluence properties 
of the relation $\longrightarrow$ modulo the equivalence $\sim$ on $F(V)$.  
In order to do so, we set ourselves within the framework described by Huet in \cite[Section 2.3]{huet} 
 to make sure that the possibility to apply any given elementary cancellation on $w\in F(V)$ only depends on its $\sim$-equivalence class; furthermore that such elementary cancellations yield $\sim$-equivalent words.

\begin{lemma}\label{lem:simcom}
Let $v, w \in F(V)$, and assume that $w\starto v$. If $\widetilde{w}\sim w$, then there exists $\widetilde{v}\sim v$ such that 
$\widetilde{w}\starto \widetilde{v}$.
\end{lemma}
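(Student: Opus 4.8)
The plan is to reduce everything to the case of a single elementary cancellation and a single elementary commutation, and then argue by induction. First, since $\starto$ is the reflexive-transitive closure of $\longrightarrow$ and $\sim$ is the transitive closure of single elementary commutations, it suffices to prove the statement in the local form: if $w\longrightarrow v$ is one elementary cancellation (left or right) and $\widetilde{w}\sim w$ via one elementary commutation $w_1abw_2\leftrightarrow w_1baw_2$, then there is $\widetilde{v}$ obtained from $v$ by a (possibly empty) sequence of elementary commutations with $\widetilde{w}\longrightarrow\widetilde{v}$ or $\widetilde w=\widetilde v$. A straightforward double induction (on the length of the commutation sequence $\widetilde w\sim w$, then on the length of the cancellation sequence $w\starto v$) then yields the general claim: starting from $\widetilde w\sim w\starto v$, apply the local statement to the first cancellation to push the commutations past it, and iterate.

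So fix a right cancellation $w = w_1 a u a w_2 \rto w_1 a u w_2 = v$ (where no letter of $u$ has an arrow to $a$; the left case is symmetric), and a single elementary commutation swapping two disconnected consecutive letters $b,c$ somewhere in $w$. The argument is by case analysis on where the swapped pair $bc$ sits relative to the ``active region'' $a\,u\,a$ of the cancellation. If the swap takes place entirely inside $w_1$, entirely inside $w_2$, or entirely inside $u$, then the same right cancellation is still available in $\widetilde w$ and produces a $\widetilde v\sim v$ obtained by performing the corresponding swap inside the surviving copy of $u$ (or of $w_1,w_2$). The only subtle cases are when the swapped pair straddles a boundary: e.g. the pair is $(x,a)$ with $x$ the last letter of $u$, or $(a,y)$ with $a$ the first of the two $a$'s and $y$ the first letter of $w_2$, or the symmetric situations on the left. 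Here one must use that $b,c$ are disconnected together with the hypothesis on $u$: if the last letter $x$ of $u$ is swapped with the right-hand $a$, then $x$ is disconnected from $a$; in particular $x$ has no arrow to $a$, so after the swap the word is $w_1 a u' a x w_2$ with $u=u'x$, and a right cancellation on the two $a$'s (legal, since $u'$ still has no arrows to $a$) gives $w_1 a u' x w_2 = w_1 a u w_2 = v$, so in fact $\widetilde v = v$ and no commutations are needed. The case where $a$ (the left copy) is swapped with the first letter $y$ of $w_2$ — which can only happen if $u$ is empty, so $w = w_1 a a w_2$ and the cancellation is $w_1aaw_2\rto w_1aw_2$ — is handled by noting $a$ disconnected from $y$ gives $\widetilde w = w_1 a y a w_2'$ (with $w_2=yw_2'$), and again a right cancellation on the two $a$'s yields $w_1 a y w_2' = w_1 a w_2 = v$. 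The boundary between $w_1$ and the first $a$ is entirely analogous.

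The main obstacle I expect is bookkeeping: making the case analysis genuinely exhaustive, being careful that ``consecutive occurrences of the same letter'' and the normalization ``$u$ does not contain $a$'' are compatible with the commutations one performs, and checking that in each straddling case the disconnectedness hypothesis really does supply exactly the ``no arrow to $a$'' (or ``from $a$'') condition needed to keep the cancellation legal after the swap. Conceptually there is nothing deep here — this is precisely the verification that $(F(V),\longrightarrow,\sim)$ fits Huet's framework of a relation compatible with an equivalence \cite[Section 2.3]{huet} — but it must be done carefully. Once the local statement is established, the inductions are routine and the proof is complete.
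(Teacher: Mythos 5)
Your proof takes essentially the same approach as the paper's: reduce to one elementary commutation against one elementary cancellation, then perform a case analysis on where the swapped pair lies relative to the block $aua$, using disconnectedness of the swapped letters to keep the cancellation legal afterwards; the argument is correct. One minor slip: the straddling case you phrase as ``the first of the two $a$'s swapped with the first letter $y$ of $w_2$'' cannot occur (those letters are never adjacent) --- the actual case is the \emph{second} $a$ swapped with $y$, and it is handled for arbitrary $u$, not just empty $u$, exactly like your $(x,a)$ case, since $uy$ still contains no letter with an arrow to $a$.
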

\begin{proof}
It suffices to only treat the case where $\widetilde{w}$ is obtained from $w$ by a single elementary commutation and $v$ is obtained from $w$ by a single elementary cancellation.

Set $w=w_1 a u a w_2$ and $v=w_1 a u w_2$, where no letter in $u$ has an arrow to $a$.
Let $\widetilde{w}$ be obtained from $w$ by means of an elementary commutation; if this occurs outside $aua$ or within $u$, then the claim is clear. The only possibly nontrivial case is when the elementary commutation involves either the leading or the ending letter in subword $aua$. Without loss of generality we may assume that $u$ does not contain the letter $a$ and the elementary commutation involves a letter of $u$.
We have two cases:
\begin{enumerate}
\item[(i)] $u=bu'$ and the elementary commutation involves $a$ and $b$. Then $w=w_1abu'aw_2$ and $\widetilde{w}=w_1bau'aw_2$.  Notice that as $u'$ is a subword of $u$, no letter from $u'$ has an arrow to $a$. Thus we may perform a right cancellation on the subword $au'a$ giving $\widetilde{v}=w_1bau'w_2$. However, $\widetilde{v}$ is obtained from $v = w_1abu'w_2$ by elementary commutation of $a$ with $b$.

\item[(ii)] $u=u'b$ and the elementary commutation involves $b$ and $a$. Then $w=w_1au'baw_2$ and $\widetilde{w}=w_1au'abw_2$. Once more we may perform
 a right cancellation on the subword $au'a$ giving $\widetilde{v}=w_1au'bw_2$, which  coincides with $v$. 
%
%
%
\end{enumerate}
The proof for left cancellations is completely analogous.
\end{proof}

\begin{remark}\label{rem:uptocom}
\begin{itemize}
\item By Lemma~\ref{lem:simcom}, each element in the $\sim$-equivalence class of a normal form is also a normal form. 
\item If $u$ only contains letters that are not connected to the letter $a$, then both a right and a left cancellation may be performed on $w_1 aua w_2$. However, the resulting words $w_1 au w_2$ or $w_1 ua w_2$ lie in the same $\sim$-equivalence class. 
\end{itemize}
\end{remark}



\section{Normal Forms and confluence}

Let us consider the framework of \cite[Section 2.3]{huet}, where, in our setting, 
\begin{itemize}
\item $\mathscr{E}$  is the free monoid $F(V)$ on $V$, 
\item $\longrightarrow$ is the binary relation $\rto \cup \lto$ on $F(V)$,
\item $\starto$ is the reflexive-transitive closure of the relation $\longrightarrow$ on $F(V)$,  
\vspace{1.5mm}
\item $\sim$ is the equivalence relation on $F(V)$ generated by elementary commutations of disconnected letters, and
\vspace{1.5mm}
\item 
$x \equiv y$ if and only if $\pi(x)=\pi(y)$, where $\pi: F(V) \to \HK_\Gamma$ is the canonical projection. Indeed, by \reref{altpresentation}, the equivalence relation $\equiv$ generated by $\longrightarrow \cup \sim$,  as from \cite[Lemma 2.6]{huet}, coincides with the quotient relation induced by the presentation of $\HK_\Gamma$.
\end{itemize}
\begin{defn}[\cite{huet}]
\begin{itemize}
The relation $\longrightarrow$ on $F(V)$ is
\item
{\em confluent modulo $\sim$} iff for all choices of $x \sim y, x', y' \in F(V)$ such that $x \starto x', y \starto y'$, one may find $\overline{x}, \overline{y}$ such that
$$x' \starto \overline{x}, \qquad y' \starto \overline{y}, \qquad \overline{x} \sim \overline{y}.$$
\item
{\em locally confluent modulo $\sim$} iff the following conditions are satisfied
\begin{itemize}
\item[$\alpha$:] for all $x,y,z\in F(V)$ such that $y$ and $z$ are obtained from $x$ by any elementary cancellation, then there exist $u,v\in F(V)$ such that $y \starto u$, $z \starto v$ and $u\sim v$;
\item[$\beta$:]  for all $x,y,z\in F(V)$ such that $x \sim y$ and $z$ is obtained from $x$ by any elementary cancellation, then there exist $u,v\in F(V)$ such that $y \starto u$, $z \starto v$ and $u\sim v$.
\end{itemize}
\end{itemize}
\end{defn}

\begin{center}
\begin{minipage}{0.4\textwidth}
\begin{center}
\begin{tikzpicture}
\tikzset{vertex/.style }
\tikzset{edge/.style = {->,> = latex'}}
\node[vertex] (x) at  (0,0) {$x$};
\node[vertex] (y) at  (-2,-2) {$y$};
\node[vertex] (z) at  (2,-2) {$z$};
\node[vertex] (u) at  (-1,-4) {$u$};
\node[vertex] (v) at  (1,-4) {$v$};
\draw[edge] (x) to (z);
\draw[edge] (x) to (y);
\draw[->] (y) to node[left] {$*$} (u);
\draw[->] (z) to node[right] {$*$} (v);
\draw[bend left=45] (u) to (0,-4);
\draw[bend right=45] (0,-4) to (v);
\end{tikzpicture}\par\medskip
\small{Condition $\alpha$}
\end{center}
\end{minipage}
\begin{minipage}{0.4\textwidth}
\begin{center}
\begin{tikzpicture}
\tikzset{vertex/.style }
\tikzset{edge/.style = {->,> = latex'}}
\node[vertex] (x) at  (0,0) {$x$};
\node[vertex] (y) at  (-2,0) {$y$};
\node[vertex] (z) at  (2,-2) {$z$};
\node[vertex] (u) at  (-2,-4) {$u$};
\node[vertex] (v) at  (0,-4) {$v$};
\draw[edge] (x) to (z);
\draw[bend left=45] (x) to (-1,0);
\draw[bend right=45] (-1,0) to (y);
\draw[->] (y) to node[left] {$*$}  (u);
\draw[->] (z) to node[right] {$*$}  (v);
\draw[bend left=45] (u) to (-1,-4);
\draw[bend right=45] (-1,-4) to (v);
\end{tikzpicture}\par\medskip
\small{Condition $\beta$}
\end{center}
\end{minipage}
\end{center}

Since each elementary cancellation decreases word length, the relation $\longrightarrow$ is noetherian \cite[Section 2.1]{huet}, i.e., there is no infinite sequence of elementary cancellations. In the noetherian case, \cite[Lemma 2.7]{huet} shows that confluence modulo $\sim$ and local confluence modulo $\sim$ are equivalent.


The following theorem proves that each normal form of a word $w\in\pi^{-1}(\gamma)$ belongs to the same $\sim$-equivalence class  and it follows that all the normal forms of $w$ have the same length. In particular we obtain that  the number of simplifying steps to achieve a normal form of $\pi(w)$ starting from $w$ is independent of the chosen simplifying sequence.

\begin{theorem}\label{th:main}
\begin{enumerate}
\item Let $x,y\in F(V)$. If $\pi(x)=\pi(y)$ and $u,v \in \Nor$ satisfy $x\starto u$ and $y\starto v$, then $u\sim v$.
\item Every simplifying sequence
$$x \longrightarrow x_1 \longrightarrow x_2 \dots \longrightarrow x_n$$
may be extended to a simplifying sequence ending on a normal form of $\pi(x)$.
\item All simplifying sequences starting from $x\in F(V)$ and ending on some normal form of $\pi(x)$ have the same length.
\end{enumerate}
\end{theorem}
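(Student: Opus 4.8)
The plan is to show that the relation $\longrightarrow$ is \emph{locally confluent modulo $\sim$} — i.e.\ that conditions $\alpha$ and $\beta$ hold — and then invoke \cite[Lemma 2.7]{huet}, which in the noetherian situation upgrades this to \emph{confluence modulo $\sim$}; the three claims of Theorem~\ref{th:main} will then follow formally. Condition $\beta$ requires no new work: it is precisely the special case of Lemma~\ref{lem:simcom} in which $w\starto v$ is a single elementary cancellation. Thus the heart of the matter is condition $\alpha$: given $x\in F(V)$ and two elementary cancellations transforming $x$ into $y$ and into $z$, one must produce $u,v$ with $y\starto u$, $z\starto v$ and $u\sim v$.

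For condition $\alpha$ I would analyse how the two cancellations interact. Each of them is determined by a letter $a$, an adjacent pair of occurrences of $a$, and a direction (left or right), and acts on a subword $a\,u\,a$ with $u$ not containing $a$. There are five configurations. (i) The two active subwords occupy disjoint intervals of positions: the cancellations act independently and commute to a common word, so one may take $u=v$. (ii) Same letter, same pair of occurrences: either the two cancellations coincide, or one is a left and the other a right cancellation; in the latter case the two sets of hypotheses together force every letter of $u$ to be disconnected from $a$, and Remark~\ref{rem:uptocom} gives $y\sim z$ directly. (iii) Same letter, two adjacent pairs sharing one occurrence, so $x=w_1\,a\,u_1\,a\,u_2\,a\,w_2$: for each of the four direction-combinations, a single further elementary cancellation on $y$ and on $z$ reaches a common word, the needed no-arrow conditions being inherited since $u_1$ and $u_2$ are subwords of $u_1u_2$. (iv) Distinct letters, with one active subword nested inside the gap of the other: the inner cancellation merely shortens a subword of the outer gap, so the outer cancellation remains legal, the two commute, and one reaches a common word. (v) Distinct letters with \emph{crossing} active subwords, $x=w_1\,a\,v_1\,b\,v_2\,a\,v_3\,b\,w_2$: here, according to the directions chosen, the no-arrow hypotheses of the two cancellations restrict to exactly the no-arrow conditions on $v_1,v_2,v_3$ needed to carry out one more cancellation on each of $y$ and $z$ and land on a common word. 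I expect case (v) to be the main obstacle, as it is the only configuration in which one must carefully keep track of which arrow conditions are actually forced; the other cases are essentially bookkeeping.

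With local confluence modulo $\sim$ established, \cite[Lemma 2.7]{huet} yields confluence modulo $\sim$. Claim (2) is then immediate from noetherianity of $\longrightarrow$: any simplifying sequence prolongs to a maximal one, which must terminate, and whose final word admits no elementary cancellation, hence is a normal form — necessarily of $\pi(x)$, since cancellations preserve $\pi$. For claim (1), I would first check that the relation $x\downarrow y$, defined to hold when $x\starto x'$, $y\starto y'$ and $x'\sim y'$ for some $x',y'$, is an equivalence relation: reflexivity and symmetry are clear, and transitivity follows by using confluence modulo $\sim$ to coalesce the two reductions issuing from the common middle term and then Lemma~\ref{lem:simcom} to transport reductions across $\sim$. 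Since $\downarrow$ contains both $\longrightarrow$ and $\sim$, it contains the equivalence relation they generate, which by Remark~\ref{altpresentation} is precisely $\equiv$; hence $\pi(x)=\pi(y)$ implies $x\downarrow y$. Finally, given normal forms $u$ with $x\starto u$ and $v$ with $y\starto v$, one chooses $x',y'$ witnessing $x\downarrow y$ and applies confluence modulo $\sim$ — using that a normal form reduces only to itself and that, by Remark~\ref{rem:uptocom}, the $\sim$-class of a normal form consists of normal forms — to conclude $u\sim v$. Claim (3) is then a corollary: applying claim (1) with $y=x$, two normal forms of $\pi(x)$ reached by simplifying sequences of lengths $m$ and $n$ are $\sim$-equivalent, hence of equal word length (elementary commutations preserve length); since each elementary cancellation drops the word length by exactly one, $m=n$.
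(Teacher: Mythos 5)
Your proof is correct and follows essentially the same route as the paper: local confluence modulo $\sim$ via conditions $\alpha$ and $\beta$ (the latter from Lemma~\ref{lem:simcom}), Huet's noetherian equivalence of local and global confluence modulo $\sim$, and the same derivations of claims (2) and (3). The only difference is that for condition $\alpha$ the paper simply cites \cite[Lemma 8]{KM09}, which even supplies a common reduct $u=v$, whereas you verify the overlap configurations directly; your case analysis, including the crossing case (v), does check out.
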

\begin{proof}
The second claim is a rephrasing of the concept of normal form, whereas the third claim follows immediately from the first one, once we notice that words in the same $\sim$-equivalence class have the same length and each elementary cancellation decreases word length by exactly one.

As for the first claim, this is just \cite[Lemma 2.6]{huet}, which is equivalent to $\longrightarrow$ being confluent modulo $\sim$. 
As we are in a noetherian setting, it is enough to prove that local confluence holds, i.e., that conditions $\alpha$ and $\beta$ are satisfied.

Condition $\alpha$ follows from \cite[Lemma 8]{KM09}, where Kudryavtseva and Mazorchuk, more in general, prove that for all $x,y,z\in F(V)$ such that $y$ and $z$ are obtained from $x$ by any elementary cancellation, then there exists $u\in F(V)$ such that $y \starto u$ and $z \starto u$.

\begin{center}
\begin{tikzpicture}
\tikzset{vertex/.style }
\tikzset{edge/.style = {->,> = latex'}}
\node[vertex] (x) at  (0,0) {$x$};
\node[vertex] (y) at  (-2,-2) {$y$};
\node[vertex] (z) at  (2,-2) {$z$};
\node[vertex] (u) at  (0,-4) {$u$};
\draw[edge] (x) to (z);
\draw[edge] (x) to (y);
\draw[->] (y) to node[left] {$*$} (u);
\draw[->] (z) to node[right] {$*$} (u);
\end{tikzpicture}
\end{center}

Condition $\beta$ follows from the fact that, by Lemma \ref{lem:simcom}, for all $x,y,z\in F(V)$ such that $x \sim y$ and $z$ is obtained from $x$ by any elementary cancellation, there exists $u \in F(V)$ such that $z \sim u$ and $u$ is obtained from $y$ by an elementary cancellation.
\end{proof}

\begin{center}
\begin{tikzpicture}
\tikzset{vertex/.style }
\tikzset{edge/.style = {->,> = latex'}}
\node[vertex] (x) at  (0,0) {$x$};
\node[vertex] (y) at  (-2,0) {$y$};
\node[vertex] (z) at  (2,-2) {$z$};
\node[vertex] (u) at  (0,-2) {$u$};
\draw[edge] (x) to (z);
\draw[edge] (y) to (u);
\draw[bend left=45] (y) to (-1,0);
\draw[bend right=45] (-1,0) to (x);
\draw[bend left=45] (u) to (1,-2);
\draw[bend right=45] (1,-2) to (z);
\end{tikzpicture}
\end{center}

\begin{corollary}
If $u$ and $v$ are normal forms of the same element in $\HK_\Gamma$, then $u \sim v$. In particular, normal forms in $\HK_{\Gamma_n}$ are unique \cite{KM09}.
\end{corollary}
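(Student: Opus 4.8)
The plan is to derive both assertions as immediate consequences of Theorem~\ref{th:main}(1), so that the proof is essentially formal. Let $u,v\in F(V)$ be normal forms of one and the same element $\gamma\in\HK_\Gamma$, so that $\pi(u)=\pi(v)=\gamma$. Since $u$ and $v$ are normal forms, no elementary cancellation can be performed on either of them; in particular the \emph{empty} simplifying sequence witnesses $u\starto u$ and $v\starto v$, with $u,v\in\Nor$. Applying Theorem~\ref{th:main}(1) with $x=u$ and $y=v$ then yields $u\sim v$, which is the first claim. As an aside, since $\sim$-equivalent words have equal length, it follows that all normal forms of a fixed element of $\HK_\Gamma$ have the same length.

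For the remaining assertion I would specialize to $\Gamma=\Gamma_n$, the acyclic graph on $\{1,\dots,n\}$ with an arrow $i\thickto j$ whenever $i<j$. Here every pair of distinct vertices is connected, so $F(V)$ contains no pair of \emph{disconnected} letters and hence no nontrivial elementary commutation is available; consequently the equivalence relation $\sim$ reduces to equality on $F(V)$. Combining this observation with the first part, any two normal forms of the same element of $\HK_{\Gamma_n}$ must coincide, which is precisely the uniqueness statement of \cite{KM09}.

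I do not anticipate any genuine obstacle: everything follows formally once Theorem~\ref{th:main} is in hand. The one point worth making explicit is that a normal form $w$ satisfies $w\starto w$ through the empty chain of cancellations; this is exactly what legitimizes using $u$ and $v$ themselves, simultaneously, as the source words and as the terminal normal forms in Theorem~\ref{th:main}(1).
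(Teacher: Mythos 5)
Your proof is correct and is exactly the argument the paper intends: the corollary is an immediate specialization of Theorem~\ref{th:main}(1) with $x=u$, $y=v$ and the empty simplifying sequences, and the uniqueness in $\HK_{\Gamma_n}$ follows because $\Gamma_n$ has no disconnected pairs of vertices, so $\sim$ is the identity. No gaps.
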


\begin{remark}
We stress the fact that \thref{th:main} uses neither finiteness of the graph $\Gamma$ nor that of the monoid $\HK_\Gamma$. For instance, when $\Gamma$ has an oriented cycle, one may prove that $\HK_\Gamma$ is infinite \cite{ad13} by noticing that each power of the ordered product of all letters in the cycle is a normal form, hence they describe infinitely many distinct elements, as they have distinct length.
\end{remark}



\section{Choosing a preferred normal form}
In actual contexts one would like to locate a \emph{favorite} normal form to work with. One way to do this is by choosing a total ordering $<$ on the set $V$ of the vertices of $\Gamma$ and employ the induced lexicographic ordering on $F(V)$ so as to choose the minimal normal form. 

If $[w]$ is the $\sim$-equivalence class of a normal form for some $\gamma\in \HK_{\Gamma}$, we will henceforth denote  by $w^{\min}$ its lexicographically minimal element. In principle, one may not be able to obtain $w^{\min}$ from $w$ by a sequence of lexicographically decreasing elementary commutations. 
\begin{example}
Consider the total ordering $a<b<c$ on the graph 

%
%

\begin{center}
\begin{tikzpicture}
\tikzset{vertex/.style = {shape=circle,draw,minimum size=1em}}
\tikzset{edge/.style = {->,> = latex'}}
\node[vertex] (a) at  (0,0) {$a$};
\node[vertex] (b) at  (2,0) {$b$};
\node[vertex] (c) at  (4,0) {$c$};
\draw[edge] (a) to [bend left] (c);
\end{tikzpicture}
\end{center}


If $w=cab$, then $[w]=\{bca,cab,cba\}$ so that $w^{\min}= bca$. Elementary commutations all involve $b$, so that the only way to commute $w$ into $w^{\min}$ is $cab\sim cba \sim bca$; however $cab$ is lexicographically lower than $cba$.
\end{example}
We thus need to find a general strategy to recover $w^{\min}$ from $w$. 
\begin{definition}
Let $a_i\in V$, $i=1,\ldots, n$, so that $w=a_1 a_2 \ldots a_n\in F(V)$. Then $a_j$ is an \emph{initial letter} of $w$, if $a_j$ commutes with $a_i$ for each $i<j$. 
\end{definition}

%

Denote now by $\io(w)$ the least initial letter of $w$
. We are going to describe a procedure to select a lexicographically minimal normal form for any element in $\HK_\Gamma$.
\begin{proposition}\label{lem:concan}
Let $w\in F(V)$ be a normal form. If $w^{\min}=a_1a_2\ldots a_n$, $a_i\in V$, then for all $k\geq 0$, $a_{k+1}$ is the least initial letter of the word $w_k$ obtained from $w$ by removing the leftmost occurrences of the letters $a_1,a_2,\ldots,a_k$. Equivalently, $w^{\min}=\io(w)\,{w_1}^{\min}$.
\end{proposition}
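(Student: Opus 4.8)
The plan is to prove the two assertions simultaneously by establishing the equivalent formulation $w^{\min} = \io(w)\,{w_1}^{\min}$, where $w_1$ is obtained from $w$ by deleting the leftmost occurrence of the letter $\io(w)$, and then iterating. The core claim to be proved is: if $w$ is a normal form and $a = \io(w)$ is its least initial letter, then $a$ is the first letter of $w^{\min}$, and moreover $w^{\min} = a\,v$ where $v$ is the lexicographically minimal element of the $\sim$-class of $w_1$. Granting this, the proposition follows by an immediate induction on word length: applying the claim to $w_1$ (which is again a normal form, since deleting a letter that commutes past everything to its left cannot create a cancellable pattern — or more directly, $a w_1 \sim w$ is a normal form, and a subword of a normal form obtained this way is a normal form) gives $a_2 = \io(w_1)$, and $w_1$ is visibly what the proposition calls the word obtained by removing the leftmost occurrence of $a_1$; continuing, $w_k$ is obtained from $w$ by removing the leftmost occurrences of $a_1,\dots,a_k$ and $a_{k+1} = \io(w_k)$.

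First I would record the structural fact about initial letters: if $a = \io(w)$ with $w = b_1 b_2 \cdots b_n$ and $a = b_j$, then $a$ commutes with each of $b_1,\dots,b_{j-1}$, so $w \sim a\, b_1 \cdots b_{j-1} b_{j+1} \cdots b_n = a\,w_1$ by a sequence of elementary commutations sliding $b_j$ to the front. Hence every element of $[w]$ that does not start with $a$ is $\sim$-equivalent to one that does, and in particular $a w_1 \in [w]$. Next, the key point is a \emph{minimality of the leading letter}: I claim no element of $[w]$ can begin with a letter strictly smaller than $a$ in the ordering $<$. Suppose $u \in [w]$ begins with a letter $c < a$. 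Since $u$ has the same letter content as $w$ and $c$ is its first letter, $c$ is an initial letter of $u$; but initial-ness is a property that should be shown to be $\sim$-invariant in the sense that the set of "letters that can be brought to the front" is the same for all words in a $\sim$-class — concretely, a letter $x$ can be the first letter of some member of $[w]$ iff in $w$ the leftmost occurrence of $x$ commutes with everything before it, and elementary commutations preserve this. Then $c$ would be an initial letter of $w$ smaller than $\io(w)$, a contradiction. Therefore $w^{\min}$ begins with $a$.

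Finally, write $w^{\min} = a\,v$. Since $w^{\min} \in [w]$ and $w^{\min} \sim a w_1$, stripping the common leading $a$ should give $v \sim w_1$; here I would need the small lemma that if $a u \sim a u'$ then $u \sim u'$ — this holds because an elementary commutation in $au$ either lies entirely within $u$ (fine) or swaps the leading $a$ with the next letter, but that next letter is then $<$ or disconnected-from $a$... actually the cleaner route is: among all members of $[w]$ starting with $a$, their tails form exactly the $\sim$-class $[w_1]$ (each commutation not touching position $1$ is a commutation of the tail, and one never \emph{needs} to commute the leading $a$ since it stays minimal), so $v$ ranges over $[w_1]$ as $w^{\min}$ is chosen, forcing $v = {w_1}^{\min}$ by lexicographic minimality of $w^{\min}$. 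The main obstacle I anticipate is precisely this bookkeeping around the $\sim$-class: proving cleanly that the tails of $\{u \in [w] : u \text{ starts with } a\}$ constitute the full class $[w_1]$ and nothing more, i.e. that commuting the leading letter is never forced. This should follow from the fact that $a$ is disconnected from every letter preceding its leftmost occurrence in \emph{any} member of $[w]$ — a consequence of the $\sim$-invariance of the initial-letter structure noted above — so that the leading $a$ can always be kept in place while any commutation elsewhere is performed; I would isolate this as a short lemma before assembling the argument.
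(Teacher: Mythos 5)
Your argument follows the paper's proof essentially step for step: the $\sim$-invariance of the set of initial letters, the observation that the leading letter of any word in $[w]$ is an initial letter of $w$ (so that $\io(w)$ must head $w^{\min}$), and an induction on length to identify the tail with ${w_1}^{\min}$. The one point you rightly flag --- that the tails of the members of $[w]$ beginning with $a=\io(w)$ form exactly $[w_1]$, i.e.\ left-cancellativity of the commutation class --- is precisely what the paper leaves inside its ``easily proved by induction on the length,'' so your treatment is, if anything, slightly more explicit about the only delicate step.
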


\begin{proof}
First of all, by the very definition, initial letters of $w$ all commute with which other. Also, if $w'$ is obtained from $w$ by an elementary commutation then the sets of initial letters of $w$ and $w'$ coincide; therefore, the set of initial letters of a word only depends on its $\sim$-equivalence class.\\
Every initial letter of $w$ may be commuted to the leftmost position; vice versa the leading letter of each word in the $\sim$-equivalence class of $w$ is an initial letter for $w$. Thus $\io(w)$ is the leftmost letter of $w^{\min}$. Now, $w^{\min}=\io(w)\,{w_1}^{\min}$ can be easily proved by induction on the length $n$ of $w$.
\end{proof}

\begin{corollary}
Define inductively a word $w \in F(V)$ to be {\em tidy} as follows:
\begin{itemize}
\item the empty word is tidy;
\item $w$ is tidy if $w = \io(w) w_1$ and $w_1$ is tidy.
\end{itemize}
Then there exists a bijection between $\HK_\Gamma$ and the set of tidy normal forms, which associates with every element $\gamma \in \HK_\Gamma$ its unique lexicographically minimal normal form.
\end{corollary}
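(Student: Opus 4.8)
The plan is to establish the claimed bijection by showing two things: that every $\sim$-equivalence class of normal forms contains exactly one tidy normal form, and that this tidy representative is the lexicographically minimal one. Combined with the Corollary to \thref{th:main} — which says that normal forms of a fixed $\gamma\in\HK_\Gamma$ form a single $\sim$-equivalence class — and with the fact (noted after the definition of normal form) that each $\gamma$ admits at least one normal form, this yields a well-defined map $\gamma\mapsto w^{\min}$ landing in the tidy normal forms, and its inverse sends a tidy normal form $w$ to $\pi(w)$.

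First I would check that \prref{lem:concan} already does most of the work on the ``minimal $\Rightarrow$ tidy'' side: unwinding its conclusion $w^{\min}=\io(w)\,{w_1}^{\min}$ inductively on word length shows that $w^{\min}$ satisfies the recursive definition of tidy, since $w_1$ here is the word obtained from $w^{\min}$ by deleting its leading letter, and ${w_1}^{\min}=w_1$ forces $w_1$ to be tidy by induction. So every lexicographically minimal normal form is tidy, and in particular $\gamma\mapsto w^{\min}$ takes values in the set of tidy normal forms. Injectivity of this assignment is immediate, because $\pi(w^{\min})=\gamma$ recovers $\gamma$.

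Next I would prove that a tidy normal form $w$ is automatically equal to $w^{\min}$, i.e.\ it is the lexicographically least element of its own $\sim$-class; this gives surjectivity of $\gamma\mapsto w^{\min}$ onto tidy normal forms and simultaneously shows the two notions coincide. The argument is again an induction on length using \prref{lem:concan}: if $w=\io(w)w_1$ with $w_1$ tidy, then by induction $w_1={w_1}^{\min}$, and since $\io(w)$ is the least initial letter of $w$ — hence, by the proof of \prref{lem:concan}, the forced leading letter of $w^{\min}$ — we get $w^{\min}=\io(w)\,{w_1}^{\min}=\io(w)w_1=w$. Finally, the map from tidy normal forms to $\HK_\Gamma$ given by $w\mapsto\pi(w)$ is a two-sided inverse: $\pi(w^{\min})=\gamma$ by construction, and $w^{\min(\pi(w))}=w$ for tidy $w$ by the coincidence just proved, while $\pi$ is injective on tidy normal forms because two tidy normal forms mapping to the same $\gamma$ lie in the same $\sim$-class (Corollary to \thref{th:main}) and a $\sim$-class has a unique tidy member (its $w^{\min}$).

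The only genuinely delicate point is the last uniqueness claim — that a $\sim$-class contains a \emph{single} tidy normal form — but this is not really an extra obstacle: it is forced by the chain ``tidy $\Rightarrow$ equal to $w^{\min}$ of its class'' together with ``each class has a well-defined $w^{\min}$''. In other words, once \prref{lem:concan} is in hand, every step is a short induction on word length and the main thing to be careful about is keeping track of which word $w_1$ refers to at each stage (the deletion of a \emph{leftmost} occurrence of a letter), and invoking \prref{lem:concan} for the correct word. I would write it up as: (i) tidy $\Leftrightarrow$ lexicographically minimal in its $\sim$-class, by induction via \prref{lem:concan}; (ii) hence $\gamma\mapsto w^{\min}$ and $w\mapsto\pi(w)$ are mutually inverse bijections between $\HK_\Gamma$ and tidy normal forms.
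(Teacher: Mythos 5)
Your argument is correct and is essentially the proof the paper intends (the paper leaves the corollary as an immediate consequence of \prref{lem:concan}): iterating $w^{\min}=\io(w)\,{w_1}^{\min}$ gives ``minimal $\Rightarrow$ tidy'', the reverse induction gives ``tidy $\Rightarrow$ minimal'', and the bijection then follows from the fact that all normal forms of a given $\gamma$ form a single $\sim$-class. The only point worth making explicit in a write-up is that the suffix $w_1$ of a normal form is again a normal form, so that the inductive hypothesis and \prref{lem:concan} indeed apply to it.
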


The above claims shows that $w^{\min}$ can be recursively computed  from $w$. Notice, however, that the actual computation of $w^{\min}$ will strongly depend on the topology of $\Gamma$. For instance, when $\Gamma=\Gamma_n$, no elementary commutations will be needed at all, so that $w$ and $w^{\min}$ will always coincide. The opposite extreme is when $\Gamma$ is a totally disconneted graph; in this case a word is normal form if and  only if each of its letters  only occurs once. One then obtains $w^{\min}$ from $w$ by sorting $w$ with respect to the chosen total order. Intermediate cases will require some ``partial sorting'' of the letter content of $w$. How to do this efficiently seems to be an interesting problem, which will be addressed in a future paper.




\end{document}